\documentclass[11pt]{article}

\usepackage[T1]{fontenc}
\usepackage[utf8]{inputenc}
\usepackage{lmodern}
\usepackage[margin=1.06in]{geometry}
\usepackage{microtype}
\usepackage{mathtools}
\usepackage{amssymb}
\usepackage{amsthm}
\usepackage{aliascnt}
\usepackage{enumitem}
\usepackage{needspace}
\usepackage[dvipsnames]{xcolor}
\usepackage[round,authoryear]{natbib}
\usepackage{hyperref}
\usepackage[nameinlink,noabbrev]{cleveref}

\hypersetup{
  pdfencoding=auto,
  colorlinks=true,
  linkcolor=MidnightBlue,
  citecolor=OliveGreen,
  urlcolor=BrickRed,
  pdftitle={Stability of Change-of-Numéraire Reweighting:
            An Exact Wasserstein Boundary},
  pdfauthor={Shaosai Huang},
  pdfsubject={Wasserstein stability of numéraire-weighted marginals},
  pdfkeywords={change of numeraire; annuity measure; Wasserstein
               distance; uniform integrability; weighted marginal;
               size-biasing; martingale optimal transport;
               self-normalized importance sampling; swaption}
}

\setlist[itemize]{leftmargin=2em,itemsep=0.3em,topsep=0.4em}
\setlist[enumerate]{leftmargin=2em,itemsep=0.3em,topsep=0.4em}
\allowdisplaybreaks

\newtheorem{theorem}{Theorem}[section]
\newaliascnt{lemma}{theorem}
\newtheorem{lemma}[lemma]{Lemma}
\aliascntresetthe{lemma}
\newaliascnt{proposition}{theorem}
\newtheorem{proposition}[proposition]{Proposition}
\aliascntresetthe{proposition}
\newaliascnt{corollary}{theorem}
\newtheorem{corollary}[corollary]{Corollary}
\aliascntresetthe{corollary}
\newaliascnt{assumption}{theorem}

\aliascntresetthe{assumption}
\theoremstyle{definition}
\newaliascnt{definition}{theorem}

\aliascntresetthe{definition}
\newaliascnt{example}{theorem}
\newtheorem{example}[example]{Example}
\aliascntresetthe{example}
\theoremstyle{remark}
\newaliascnt{remark}{theorem}
\newtheorem{remark}[remark]{Remark}
\aliascntresetthe{remark}
\crefname{theorem}{Theorem}{Theorems}
\Crefname{theorem}{Theorem}{Theorems}
\crefname{lemma}{Lemma}{Lemmas}
\Crefname{lemma}{Lemma}{Lemmas}
\crefname{proposition}{Proposition}{Propositions}
\Crefname{proposition}{Proposition}{Propositions}
\crefname{corollary}{Corollary}{Corollaries}
\Crefname{corollary}{Corollary}{Corollaries}
\crefname{assumption}{Assumption}{Assumptions}
\Crefname{assumption}{Assumption}{Assumptions}
\crefname{definition}{Definition}{Definitions}
\Crefname{definition}{Definition}{Definitions}
\crefname{example}{Example}{Examples}
\Crefname{example}{Example}{Examples}
\crefname{remark}{Remark}{Remarks}
\Crefname{remark}{Remark}{Remarks}
\crefname{section}{Section}{Sections}
\Crefname{section}{Section}{Sections}

\newcommand{\R}{\mathbb R}
\newcommand{\E}{\mathbb E}
\newcommand{\one}{\mathbf 1}
\newcommand{\dd}{\,\mathrm d}
\newcommand{\cP}{\mathcal P}
\newcommand{\cK}{\mathcal K}
\newcommand{\Ws}{W_s}
\newcommand{\Hplus}{[0,\infty)\times\R}
\newcommand{\Hopen}{(0,\infty)\times\R}

\title{Stability of Change-of-Numéraire Reweighting:\\
       An Exact Wasserstein Boundary%
       \thanks{Working paper.  Comments welcome.}}
\author{Shaosai Huang\thanks{Kspectra Research Inc., Toronto, ON,
M2N 0G3, Canada.  Email:
\texttt{arthur.foxie.huang@kspectra.ai}.}}
\date{2026-08-28}

\begin{document}

\renewcommand{\thefootnote}{\fnsymbol{footnote}}
\maketitle
\renewcommand{\thefootnote}{\arabic{footnote}}
\setcounter{footnote}{0}

\begin{abstract}
Reweighting a probability law by a positive numéraire and pushing
forward the payoff-to-numéraire ratio yields the change-of-numéraire
reweighting: the swap-rate law under the annuity measure, the
numéraire-inversion involution of martingale optimal transport, and
the population form of self-normalized importance sampling.  We characterize exactly when it
is Wasserstein-stable.  Along weakly convergent inputs with uniformly
integrable numéraires and a strictly positive limiting numéraire mean,
convergence of the reweighted laws is equivalent to a uniform
integrability condition, computed under the inputs, on one explicit
family in the payoff and numéraire.  At first order the numéraire
cancels, the sole obstruction being payoff mass where the numéraire
vanishes; absent such mass, prices and first moments pass to the limit
under a uniformly integrable payoff alone, with no assumption on the
reciprocal numéraire.  Two-atom examples with bounded inputs attain
the threshold exactly.  Applications characterize a standing moment
assumption in the transport literature and delimit which
annuity-measure statistics survive.
\end{abstract}

\medskip
\noindent\textbf{Keywords.}  Change of numéraire; annuity measure;
Wasserstein distance; uniform integrability; weighted marginal;
size-biasing; martingale optimal transport; self-normalized importance
sampling; swaption.

\medskip
\noindent\textbf{MSC 2020.}  Primary 60B10; secondary 28A33, 49Q22,
91G20, 91G30.

\medskip
\noindent\textbf{JEL classification.}  G13; G12; C02; C65.

\section{Introduction}
\label{sec:nr-intro}

Let $Q$ be a probability law carrying a pair $(A,B)$ with
$A>0$ and $\E_Q[A]\in(0,\infty)$.  The \emph{change-of-numéraire
reweighting} of $Q$ is the probability measure
\begin{equation}
 \Gamma(Q)
 :=Z_\#\bigl(D\dd Q\bigr),
 \qquad
 D:=\frac{A}{\E_Q[A]},
 \qquad
 Z:=\frac BA .
 \label{eq:nr-gamma}
\end{equation}
This object is ubiquitous.  In interest-rate markets, $A$ an annuity
and $B$ the difference of swap legs make $\Gamma(Q)$ the law of the
swap rate under the annuity measure, the measure in which swaptions
are quoted \citep{Neuberger1990,GemanElKarouiRochet1995,
Jamshidian1997}.  In martingale optimal transport, the case
$B\equiv1$, $A=X$ is the numéraire-inversion involution
$S(\mu)=\mathrm{law}\bigl(1/X\ \text{under}\ (X/\E[X])\dd\mu\bigr)$
of \citet{CampiLaachirMartini2017}, recently extended to weak
transport and continuous time by \citet{BeiglboeckPammerRiess2024}
and, with a general reweighting function, by
\citet{BackhoffLoeperObloj2024}; see also
\citet{HuesmannStebegg2018}, where the weighted inversion is singled
out among all monotonicity-preserving transformations of martingale
transport.  In statistics, \eqref{eq:nr-gamma} with $B\equiv A\cdot
Z$ is the population form of a self-normalized importance-sampling
target, and for $B=A^2$ it is size-biasing
\citep{ArratiaGoldsteinKochman2019}.

\subsection{Main results}
\label{subsec:nr-main}

We are not aware of a base-law characterization of Wasserstein
$\Ws$-continuity for normalized $A$-reweighting followed by the
quotient map $B/A$ that also allows the weak limit to charge
$\{A=0\}$.  This note gives such a characterization: an exact
two-way boundary carried by a single scalar family.  Under weak
convergence of the input laws with uniformly
integrable numéraires and a strictly positive limiting numéraire mean
(\cref{lem:nr-weak}), $\Ws$-convergence of the weighted marginals is
\emph{equivalent} to vanishing, uniformly in the sequence index, of the
base-law ratio tails of
\begin{equation}
 |B|^s A^{1-s}
 \label{eq:nr-boundary-family}
\end{equation}
on the event $\{|B_n|>RA_n\}$ (\cref{thm:nr-boundary}).  When the
limit does not charge $\{A=0\}$, this is equivalent to ordinary
uniform integrability of the family
\eqref{eq:nr-boundary-family} (\cref{cor:nr-sufficient}).  At
$s=1$ that family collapses to $|B_n|$: uniform integrability of the
payoff leg then gives $W_1$-convergence with \emph{no moment condition
on the inverse numéraire $1/A_n$}.  More generally, the only residual
$s=1$ boundary obstruction is nonzero payoff mass at $A=0$.  This is
an exact cancellation, not an estimate: at the level of call prices
and first moments, the numéraire divides out identically.

The boundary is attained.  For each prescribed $s>1$, a two-atom
example with all input variables in $[0,1]^2$ --- so every
nonnegative-order moment of the primitive coordinates is uniformly
bounded --- has weighted marginals converging in
$W_\sigma$ exactly for $1\leq\sigma<s$, failing at $s$, and having
divergent $\sigma$-moments for $\sigma>s$
(\cref{ex:nr-two-atom}).  The mechanism is a vanishing-numéraire
state: a scenario of probability $\varepsilon$ in which $A$ collapses
while $B$ stays of order one carries the entire $s$-th weighted
moment.  Nothing escapes to infinity on the input side; the escape is
created by the ratio-and-reweight map itself.  Two further examples
delimit the hypotheses: if the limit law charges the
vanishing-numéraire boundary $\{A=0\}$ with nonzero payoff mass, the
$s=1$ cancellation fails even for bounded inputs
(\cref{ex:nr-boundary-mass}); and without
uniform integrability of the numéraires, even weak stability of the
weighted marginals fails (\cref{ex:nr-numeraire-ui}).

The normalized reweighting itself is classical; the additional issue
here is its interaction with the quotient.  Before the quotient
pushforward, $Q\mapsto A\,Q/\E_Q[A]$ is a size-biased law and admits a
one-point Palm representation.  Kallenberg's continuity theorem gives
a three-way relation for the retained random measure, its mean mass,
and its normalized mass-biased law; the forward implication uses the
uniform integrability supplied by weak convergence together with
convergence of the mean masses
\citep[Section~10, Theorem~10.5]{Kallenberg1974}; see also
\citet[Chapter~6]{Kallenberg2017}.  This is not a three-way
equivalence for an arbitrary marked law of $(A,B)$: the one-point
random-measure encoding identifies all states with $A=0$ and therefore
cannot recover the mark $B$ there.

In sequential Monte Carlo the normalized-reweighting operator is the
Boltzmann--Gibbs or selection map, whose stability is classical in
total variation and in weighted total-variation norms
\citep{DelMoral2004,DelMoral2013,DelMoralHortonJasra2023} and for
which genuine Wasserstein estimates exist
\citep{Whiteley2021,Borghi2026}.  Three features separate those
selection/filter results from what is proved here.  They are
one-directional sufficient bounds rather than characterizations, and
the bounds that give a Lipschitz constant for the reweighting itself
obtain it from two-sided control of the weight --- in the
total-variation estimates the mean of the weight sits in a
denominator, so those bounds degenerate exactly where
\cref{cor:nr-sufficient} operates.  Results reaching beyond bounded
weights answer a different question: contraction in time for a fixed
model, rather than continuity along a varying sequence of models.
They reweight and retain the underlying variable; they do not push
forward by the ratio $B/A$.  Consequently they do not identify the
extra boundary term created when the quotient pushforward meets
$B$-mass on the zero-weight fiber $\{A=0\}$.  What follows is
accordingly an equivalence rather than a bound, for the ratio
marginal, allowing unbounded weights and a limit that may charge the
zero-weight fiber; \cref{subsec:nr-honest} places it against each
literature in turn.

Two consequences are immediate.  In the change-of-numéraire
literature, \citet{CampiLaachirMartini2017} and
\citet{BeiglboeckPammerRiess2024} prove algebraic and structural
theorems about $S$ and \emph{assume} what they need about moments of
the transformed marginal: the standing finite-second-moment
hypothesis on $S(\nu)$ in \citet{BeiglboeckPammerRiess2024} is
precisely the $s=2$, $B\equiv1$ instance of
\eqref{eq:nr-boundary-family}, and \citet{BackhoffLoeperObloj2024}
impose sufficient inverse-moment conditions ($\mu\in\cP_{-1,1}$).
\Cref{cor:nr-cn} characterizes exactly when such hypotheses survive a
limit.  On the sampling side, \citet{CosteGoldman2026} prove sharp
rates for the self-normalized importance-sampling \emph{empirical}
measure in $W_p$ under bounded densities; the present note is the
population-level companion of that question, in the unbounded-weight
regime where \eqref{eq:nr-boundary-family} binds.

\subsection{Why the boundary matters in the motivating application}
\label{subsec:nr-why}

For annuity-weighted swap-rate marginals the dichotomy is not a
technicality; it separates two computations practitioners treat very
differently.  Physically settled swaption prices and every
$W_1$-continuous functional of the annuity-measure marginal sit at
$s=1$, where the
theorem says: for weakly convergent primitive laws with uniformly
integrable annuities and legs and a boundary-free limit, prices move
continuously with no separate inverse-annuity moment hypothesis.
Related finite-model calculations, such as the discounting-switch
comparison of \citet{Piterbarg2020}, express prices under a common
annuity measure through an annuity-ratio density and approximate its
conditional projection onto the swap rate, in the tradition of the
annuity-mapping framework of
\citet[Chapter~16]{AndersenPiterbarg2010}.  Our result addresses a
different, asymptotic question: convergence of the primitive laws
implies convergence of physically settled prices under the stated
uniform-integrability hypotheses.  It neither prices a finite
discounting switch nor bounds a ratio-approximation error.

The raw quadratic statistic
$\int z^2\dd\Gamma=\E_Q[B^2/A]/\E_Q[A]$, which enters second-order
CMS-type approximations \citep{Hagan2003}, sits at $s=2$.
In its stylized annuity interpretation, \cref{ex:nr-two-atom}
exhibits bounded annuities and legs for which
the entire normalized call-price curve converges while this statistic
stays wrong by a fixed amount.  This is the same higher-moment integrability
obstruction highlighted, in a different setting, by the
moment-explosion analysis of \citet{AndersenPiterbarg2007}; it also
connects with the moment-extraction discussion of
\citet[fn.~16]{TrolleSchwartz2014}.  Cash-settled swaptions require a
different interpretation.  Provided $G(S)>0$ and
$0<\E[G(S)]<\infty$, their payoff $G(S)(S-K)^+$ remains
algebraically within the perspective structure after setting
$A=G(S)$ and $B=S G(S)$, so the $s=1$ cancellation applies to the
normalized $G$-weighted law.  But $G(S)$ is not a traded numéraire,
so this weighting is not supplied as a market pricing measure by the
change-of-numéraire theorem.  In the reverse no-arbitrage problem of
recovering a forward density from quoted cash-settled prices,
inverse-annuity integral conditions appear in \citet{Mercurio2008}.

\subsection{Mathematical decomposition and related work}
\label{subsec:nr-honest}

The map $Q\mapsto\Gamma(Q)$ first biases the input law by
$A/\E_Q[A]$ and then pushes the retained pair $(A,B)$ forward by the
quotient $B/A$.  For nonnegative Borel $f$,
\[
 \int f\,\dd\Gamma(Q)
 =\frac{\E_Q[A f(B/A)]}{\E_Q[A]}.
\]
Taking $f(z)=(z-K)^+$ or $f(z)=|z|^s$ gives the annuity-measure
pricing identity or
$\int|z|^s\dd\Gamma=\E_Q[|B|^sA^{1-s}]/\E_Q[A]$.
The call identity follows from change of numéraire
\citep{Neuberger1990,GemanElKarouiRochet1995,Jamshidian1997,
AndersenPiterbarg2010,Hagan2003}; analytically, the formula combines
a perspective integrand with the defining identity of a weighted law
\citep{Combettes2018,ArratiaGoldsteinKochman2019}.  The continuity
question concerns their composition, because $B/A$ can become large
along states where the reweighting factor $A$ is small while $B$ does
not vanish.

Under \ref{it:nr-s1}--\ref{it:nr-s2},
\cref{lem:nr-weak} first gives $\Gamma_n\Rightarrow\Gamma$, even when
the limiting base law charges $\{A=0\}$; the reweighting discards that
fiber.  Pulling Villani's weak-plus-moment characterization of
Wasserstein convergence through the moment identity then gives, for
every $s\geq1$,
\[
 \begin{aligned}
 &\Gamma\in\cP_s(\R),\quad
   \Gamma_n\in\cP_s(\R)\ \text{for every }n,\quad
   \Ws(\Gamma_n,\Gamma)\to0
 \\
 &\hspace{4em}\Longleftrightarrow\quad
   \lim_{R\to\infty}\sup_n
   \E\!\left[
   |B_n|^sA_n^{1-s}\one_{\{|B_n|>RA_n\}}
   \right]=0.
 \end{aligned}
\]
The weak-convergence step is related to the change-of-law argument in
Le Cam's third lemma \citep[Theorem~6.6]{vanderVaart1998}; the moment
step uses \citet[Theorem~6.9]{Villani2009}, with related formulations
in \citet{KraetschmerSchiedZaehle2014} and
\citet{FeinbergKasyanovLiang2020}.  If $Q(\{a=0\})=0$, the ratio-tail
condition is equivalent to ordinary uniform integrability of
$\{|B_n|^sA_n^{1-s}\}$.  At $s=1$, uniform integrability of
$\{|B_n|\}$ leaves exactly the boundary residue:
\[
 W_1(\Gamma_n,\Gamma)\to0
 \quad\Longleftrightarrow\quad
 \int_{\{a=0\}}|b|\,\dd Q=0.
\]
Thus mass at $(0,0)$ is harmless, whereas nonzero payoff mass on the
zero-numéraire fiber remains in primitive first moments but is absent
from the weighted limit.

Palm and size-bias results describe the first operation rather than
the quotient composition.  In the one-point specialization,
Kallenberg relates a retained random measure, its mean mass, and its
normalized mass-biased law
\citep[Section~10, Theorem~10.5]{Kallenberg1974}; see also
\citet[Chapter~6]{Kallenberg2017}.  Zero-mass states are permitted and
discarded, but the encoding identifies all $B$-marks on $\{A=0\}$ and
does not apply $B/A$.  For a family of nonnegative variables whose
means are bounded away from zero,
\citet[Theorem~8.1]{ArratiaGoldsteinKochman2019} equate uniform
integrability with tightness of the size-biased family, through the
identity $\E[X;X>L]=\E[X]\,\mathbb P(X^*>L)$ that the proofs below
also use; \cref{thm:nr-boundary} is that criterion one level up.
\Citet{Gnedin1998} likewise allow mass loss for size-biased
permutations, representing it by an added zero component.  These are
weak or tightness statements for retained objects; the result here is
a $\Ws$ criterion after the quotient pushforward.

Other neighboring theories contain parts of the same algebra.
Latz's condition (A5) places a prior-integrable envelope on
$\|\theta\|^pA(y\mid\theta)$, which becomes $|B|^sA^{1-s}$ for
$B=\theta A$ and $s=p$ \citep[Theorem~19]{Latz2020}.  That theory
assumes a positive likelihood; its survey discusses noiseless
degeneracies, including a subcase with vanishing normalization,
whereas the limiting normalization here remains positive
\citep[\S7.2]{Latz2023}.  Related posterior results give sufficient
stability bounds rather than this boundary equivalence
\citep{Sprungk2020,CvetkovicLie2025,DoleraMainini2023}.  When $A$ and
$B$ are densities, the mixed moment is a power integral adjacent to
contiguity and Hellinger-process theory, although Hellinger integrals
proper have orders in $(0,1)$ rather than $s\geq1$
\citep[Chapter~IV]{JacodShiryaev2003}.  Martingale-transport stability
instead perturbs unweighted marginals of a coupling problem
\citep{BackhoffPammer2022,Wiesel2023,
BeiglboeckJourdainMargheritiPammer2023,JourdainPammer2023};
the instability examples of \citet{BrueckerhoffJuillet2022} concern
dimension rather than reweighting.

The boundary residue also has a perspective-functional
interpretation.  The closed extension of
$a\phi(b/a)$, with $\phi(z)=|z|^s$, places a recession term on $a=0$
\citep{Spector2011,LieroMielkeSavare2018,SavareSodini2024}, whereas
the limiting quotient law $\Gamma$ assigns that fiber zero weight.
At $s=1$, the discrepancy between the perspective numerator and the
weighted-law numerator is exactly
$\int |b|\dd Q-\E_Q[A]\int|z|\dd\Gamma
=\int_{\{a=0\}}|b|\dd Q$.  Generalized Young-measure theory gives a
related local two-sided criterion through vanishing concentration,
with spatial tightness needed for a global conclusion
\citep[Theorem~2.9]{AlibertBouchitte1997}.  For $s>1$, extended-valued
lower-semicontinuity results accommodate the superlinear perspective,
but do not yield a two-sided Wasserstein criterion for the normalized
quotient law.  The weak-plus-homogeneous-moment template appears in
balanced transport \citep[Proposition~7.1.5]{AmbrosioGigliSavare2008}
and in unbalanced transport
\citep[Theorem~A.5]{SavareSodini2024}.  Here it yields the exact
base-law ratio-tail equivalence, its ordinary-uniform-integrability
form off the boundary, and the first-order boundary residue.  The
examples attain the exponent and separate boundary failure from
failure of numéraire uniform integrability; \cref{thm:nr-joint}
packages the criterion into joint continuity over compact model and
parameter classes.

\subsection{Structure}

\Cref{sec:nr-identity} fixes the setting and records the
cancellation identities.  \Cref{sec:nr-stability} proves the weak
stability lemma, the two-way boundary theorem, and its sharp
ordinary-uniform-integrability form.  \Cref{sec:nr-sharp} presents the three
examples.  \Cref{sec:nr-joint} gives the parameterized
joint-continuity theorem.  \Cref{sec:nr-applications} works out the
two applications.  An AI-use disclosure ends the paper.

\section{The reweighting map and the cancellation identities}
\label{sec:nr-identity}

Everything below rests on a single identity: weighting by $A$ and
dividing by $A$ cancel, so each moment of $\Gamma$ is a base-law
expectation of an explicit integrand.  We record it in the generality
the stability theory will need, and fix the notation used throughout.

Fix a probability law $Q$ on a measurable space carrying measurable
$A>0$ and $B\in\R$ with $A^0:=\E_Q[A]\in(0,\infty)$, and define
$D$, $Z$, $\Gamma=\Gamma(Q)$ by \eqref{eq:nr-gamma}.  Since
$\E_Q[D]=1$ and $D>0$, $\Gamma$ is a probability measure on $\R$.

\begin{proposition}[Perspective cancellation]
\label{prop:nr-cancellation}
The following hold as identities in $[0,\infty]$.  For every
$K\in\R$,
\begin{equation}
 \int(z-K)^+\,\Gamma(\dd z)
 =\frac{\E_Q\bigl[(B-KA)^+\bigr]}{A^0},
 \qquad
 \int|z|\,\Gamma(\dd z)
 =\frac{\E_Q|B|}{A^0},
 \label{eq:nr-call-identity}
\end{equation}
and for every $s>1$,
\begin{equation}
 \int|z|^s\,\Gamma(\dd z)
 =\frac{\E_Q\bigl[|B|^s A^{1-s}\bigr]}{A^0}.
 \label{eq:nr-moment-identity}
\end{equation}
More generally, for every measurable $f\geq0$,
\begin{equation}
 \int f\,\dd\Gamma=\frac{\E_Q[A\,f(B/A)]}{A^0}.
 \label{eq:nr-general-identity}
\end{equation}
In particular calls and the first moment are finite whenever
$\E_Q|B|<\infty$, with no condition on $1/A$; the $s$-moment is
finite exactly when $\E_Q[|B|^sA^{1-s}]<\infty$.
\end{proposition}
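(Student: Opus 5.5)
We prove the general identity \eqref{eq:nr-general-identity} first and read off everything else as special cases.

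\emph{Step 1: the general identity.} By definition \eqref{eq:nr-gamma}, $\Gamma$ is the pushforward of the probability measure $D\dd Q$ under $Z=B/A$. Here $Z$ is well defined and measurable because $A>0$ everywhere. The abstract change-of-variables formula for pushforward measures applies to any measurable $f\geq0$, with values in $[0,\infty]$. It gives
\[
 \int f\,\dd\Gamma=\int f(Z)\,D\,\dd Q=\frac{\E_Q[A\,f(B/A)]}{A^0}.
\]
No integrability is needed, since both sides are integrals of nonnegative functions. To be careful, one verifies the formula first for indicators, which is exactly the definition of the pushforward. One then extends to simple functions by linearity and to general $f\geq0$ by monotone convergence.

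\emph{Step 2: specialize $f$.} Every specialization uses only the positive homogeneity of the chosen $f$ together with $A>0$.
\begin{itemize}
\item Take $f(z)=(z-K)^+$. Then $A(B/A-K)^+=(B-KA)^+$.
\item Take $f(z)=|z|$. Then $A|B/A|=|B|$.
\item Take $f(z)=|z|^s$ with $s>1$. Then $A|B/A|^s=|B|^sA^{1-s}$.
\end{itemize}
Substituting these into Step 1 gives \eqref{eq:nr-call-identity} and \eqref{eq:nr-moment-identity} as identities in $[0,\infty]$.

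\emph{Step 3: finiteness statements.}
\begin{itemize}
\item For the first moment, $A^0\in(0,\infty)$ gives $\int|z|\dd\Gamma<\infty$ iff $\E_Q|B|<\infty$.
\item For the calls, use $(B-KA)^+\leq|B|+|K|A$. Since $\E_Q[A]<\infty$, this bounds every call price by $(\E_Q|B|+|K|A^0)/A^0$, which is finite whenever $\E_Q|B|<\infty$.
\item For $s>1$, the equivalence between finiteness of the $s$-moment and $\E_Q[|B|^sA^{1-s}]<\infty$ is immediate from \eqref{eq:nr-moment-identity}, again because $0<A^0<\infty$.
\end{itemize}
No quantity involving $1/A$ enters any of these bounds.

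There is no genuine obstacle. The only point needing care is the bookkeeping in $[0,\infty]$: we work with nonnegative integrands throughout, so no cancellation of infinities arises. We also use $A>0$ pointwise, which is what makes the perspective simplifications valid with no exceptional set.
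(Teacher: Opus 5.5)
Your proposal is correct and follows the paper's route: the general identity is the defining property of the pushforward of $D\,\dd Q$ under $Z=B/A$, and the call and moment identities follow from positive homogeneity with $A>0$. You additionally spell out the extension from indicators to all measurable $f\geq0$ and the explicit call bound $(B-KA)^+\leq|B|+|K|A$ for the finiteness claims, both of which the paper leaves implicit.
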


\begin{proof}
Equation \eqref{eq:nr-general-identity} is the definition of
$\Gamma$.  Positive homogeneity gives
$A\,(B/A-K)^+=(B-KA)^+$ and $A\,|B/A|^s=|B|^sA^{1-s}$, proving
\eqref{eq:nr-call-identity} and \eqref{eq:nr-moment-identity}.
\end{proof}

\begin{remark}[Lineage]
\label{rem:nr-lineage}
In the annuity instance, \eqref{eq:nr-call-identity} is the
classical swaption pricing identity
\citep{Neuberger1990,GemanElKarouiRochet1995,Jamshidian1997}.
Abstractly, the map $(A,B)\mapsto\E_Q[A\,f(B/A)]$ is a
perspective-function integral \citep{Combettes2018}, and
\eqref{eq:nr-general-identity} is the definitional identity of a
weighted law \citep{ArratiaGoldsteinKochman2019}.  We claim no
novelty for \cref{prop:nr-cancellation}; the note pivots on reading
it as a statement about \emph{which base-law functional controls
which $\Gamma$-moment}: at $s=1$ the numéraire cancels exactly, and
for $s>1$ the exact cost is the single scalar family
\eqref{eq:nr-boundary-family}.
\end{remark}

\section{The two-way stability theorem}
\label{sec:nr-stability}

\Cref{prop:nr-cancellation} converts each $\Gamma$-moment into a
base-law expectation.  This section turns that conversion into a
stability theory: we fix a mode of convergence for the input laws,
show that the weighted marginals then converge weakly at no further
cost, and identify which additional condition upgrades weak
convergence to $\Ws$.  The identification is an equivalence, so it
also says what cannot be weakened.

\subsection{Standing setting}

For $n\in\mathbb N$, let $Q_n$ be the law of a pair $(A_n,B_n)$
with values in $\Hopen$.  Let $Q$ be a probability law on $\Hplus$,
write $(A,B)$ for its coordinate map, and assume throughout this
section:
\begin{enumerate}[label=\textup{(S\arabic*)}]
\item\label{it:nr-s1} $Q_n\to Q$ weakly as laws on the closed
      half-plane $\Hplus$ (the limit is allowed to charge the
      boundary $\{0\}\times\R$ unless stated otherwise);
\item\label{it:nr-s2} the numéraires $\{A_n\}_{n<\infty}$ are
      uniformly integrable, and $m:=\int a\,\dd Q>0$.
\end{enumerate}
Under \ref{it:nr-s1}--\ref{it:nr-s2}, $\E[A_n]\to m$ (uniform
integrability upgrades weak convergence of the nonnegative marginals
to convergence of means).  Define $\Gamma_n:=\Gamma(Q_n)$ by
\eqref{eq:nr-gamma} for $n<\infty$, and define the limit object by
zero-weighting the boundary:
\begin{equation}
 \int f\,\dd\Gamma
 :=\frac1m\int_{\{a>0\}} a\,f(b/a)\,Q(\dd a,\dd b),
 \qquad f\in C_b(\R).
 \label{eq:nr-gamma-limit}
\end{equation}
Since $\int_{\{a>0\}}a\,\dd Q=\int a\,\dd Q=m$, $\Gamma$ is a
probability measure; when $Q(\{a=0\})=0$ it is $\Gamma(Q)$.
Let $\mu(\dd a,\dd b):=(a/m)\one_{\{a>0\}}Q(\dd a,\dd b)$ and define
$T(a,b):=b/a$ for $a>0$ and $T(0,b):=0$.  Then $T$ is Borel,
$\mu(\{a=0\})=0$, and equality in \eqref{eq:nr-gamma-limit} for
$f\in C_b(\R)$ identifies $\Gamma=T_\#\mu$.  Consequently,
\eqref{eq:nr-gamma-limit} holds for every nonnegative Borel $f$: first
for bounded Borel $f$ by equality of the measures, and then in general
by monotone convergence.

\subsection{Weak stability}

Weak convergence of the weighted marginals costs nothing beyond
\ref{it:nr-s1}--\ref{it:nr-s2}.  The argument is a truncation, carried
out in the boundary-permitting form \eqref{eq:nr-gamma-limit}: mass
that the limit places on $\{a=0\}$ simply receives no weight.  That
convention is not a technical convenience, and
\cref{ex:nr-boundary-mass} shows what it costs when the payoff does
not vanish there.

\begin{lemma}[Weak stability under numéraire uniform integrability]
\label{lem:nr-weak}
Under \ref{it:nr-s1}--\ref{it:nr-s2}, $\Gamma_n\to\Gamma$ weakly.
\end{lemma}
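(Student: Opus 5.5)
It suffices to show $\int f\,\dd\Gamma_n\to\int f\,\dd\Gamma$ for every $f\in C_b(\R)$. By \eqref{eq:nr-general-identity} and \eqref{eq:nr-gamma-limit},
\[
 \int f\,\dd\Gamma_n=\frac{\E[A_nf(B_n/A_n)]}{\E[A_n]},\qquad
 \int f\,\dd\Gamma=\frac1m\int_{\{a>0\}}a\,f(b/a)\,\dd Q .
\]
The denominators converge: the uniform integrability in \ref{it:nr-s2} together with \ref{it:nr-s1} gives $\E[A_n]\to m>0$. So the task reduces to showing that the numerators converge, that is, $\int g\,\dd Q_n\to\int g\,\dd Q$ for $g(a,b):=a f(b/a)$ on $\{a>0\}$ and $g(0,b):=0$.

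The difficulty is that $g$ is unbounded and not continuous on the boundary $\{a=0\}$, because $f(b/a)$ oscillates as $a\downarrow0$. I would handle this by truncating near the boundary. Fix $\delta>0$ and let $\chi_\delta$ be a continuous cutoff with values in $[0,1]$, equal to $0$ on $[0,\delta]$ and to $1$ on $[2\delta,\infty)$. Split $g=g\chi_\delta(a)+g(1-\chi_\delta(a))$. The remainder is bounded pointwise by
\[
 |g(a,b)|\bigl(1-\chi_\delta(a)\bigr)\le 2\delta\|f\|_\infty,
\]
uniformly in $n$ and also under $Q$. Note that this estimate is unaffected by the $Q$-mass on $\{a=0\}$, since $g$ vanishes there anyway.

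The main part $g\chi_\delta$ is continuous on $\Hplus$, because $b/a$ is continuous on $\{a>\delta/2\}$ and the cutoff kills the rest. It is still unbounded, growing linearly in $a$. Fix $M$ and cap it by writing $a\wedge M$ in place of $a$ as the prefactor. The capped function is bounded and continuous, so \ref{it:nr-s1} gives convergence of its integrals. The discrepancy from the cap is at most $\|f\|_\infty\,\E[A_n\one_{\{A_n>M\}}]$, which tends to $0$ as $M\to\infty$ uniformly in $n$ by \ref{it:nr-s2}. The corresponding term under $Q$ also vanishes, since $\int a\,\dd Q=m<\infty$.

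Combining the three pieces,
\[
 \limsup_n\Bigl|\int g\,\dd Q_n-\int g\,\dd Q\Bigr|
 \le 4\delta\|f\|_\infty+2\|f\|_\infty\sup_n\E\bigl[A_n\one_{\{A_n>M\}}\bigr]+\|f\|_\infty\int_{\{a>M\}}a\,\dd Q .
\]
Letting $M\to\infty$ and then $\delta\to0$ gives convergence of the numerators, and dividing by $\E[A_n]\to m$ finishes the proof.

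The main obstacle is the boundary discontinuity of $g$. The weight $a$ itself supplies the $O(\delta)$ control near $a=0$, so no condition on $B_n$ is needed.
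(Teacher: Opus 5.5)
Your argument is correct and is essentially the paper's proof: cap the prefactor $a$ at a level $M$ using the uniform integrability in \ref{it:nr-s2}, apply \ref{it:nr-s1} to the resulting bounded continuous function, control the limit-law tail, and divide by $\E[A_n]\to m$. The only difference is your near-boundary cutoff $\chi_\delta$, which is harmless but redundant, and your claim that $g$ is discontinuous on $\{a=0\}$ is false. The bound $|g(a,b)|\le a\|f\|_\infty$ forces $g(a_k,b_k)\to0=g(0,b)$ whenever $(a_k,b_k)\to(0,b)$, because the factor $a$ kills the oscillation of $f(b/a)$; your own estimate $|g|(1-\chi_\delta)\le2\delta\|f\|_\infty$ already shows this. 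So $g$ is continuous on all of $\Hplus$, and the paper needs only the truncation at large $a$.
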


\begin{proof}
For $f\in C_b(\R)$ define $g_f:\Hplus\to\R$ by
$g_f(a,b):=a\,f(b/a)$ for $a>0$ and $g_f(0,b):=0$.  Then $g_f$ is
continuous: at interior points this is clear, and if
$(a_k,b_k)\to(0,b)$ then $|g_f(a_k,b_k)|\leq a_k\|f\|_\infty\to0$.
Moreover $|g_f|\leq\|f\|_\infty\,a$.  Fix $\varepsilon>0$ and choose
$M$ with $\sup_n\E[A_n\one\{A_n>M\}]\leq\varepsilon$, possible by
\ref{it:nr-s2}.  Let $\chi_M:[0,\infty)\to[0,1]$ be continuous with
$\chi_M=1$ on $[0,M]$ and $\chi_M=0$ on $[2M,\infty)$.  Then
$g_f\cdot\chi_M(a)\in C_b(\Hplus)$, so
$\E[g_f\chi_M(A_n,B_n)]\to\int g_f\chi_M\,\dd Q$ by
\ref{it:nr-s1}, while
$|\E[g_f(A_n,B_n)]-\E[g_f\chi_M(A_n,B_n)]|
 \leq\|f\|_\infty\,\E[A_n\one\{A_n>M\}]\leq\|f\|_\infty\varepsilon$
uniformly in $n$.  The same tail bound holds for the limit law by the
lower-semicontinuous Portmanteau theorem applied to
$a\one\{a>M\}$.  Hence
$\E[g_f(A_n,B_n)]\to\int g_f\,\dd Q$.  Taking $f\equiv1$ gives
$\E[A_n]\to m$, and therefore
$\int f\,\dd\Gamma_n=\E[g_f(A_n,B_n)]/\E[A_n]\to
 \bigl(\int g_f\,\dd Q\bigr)/m=\int f\,\dd\Gamma$.
\end{proof}

\begin{remark}
\label{rem:nr-lecam}
\Cref{lem:nr-weak} is a change-of-numéraire dress of Le Cam's third
lemma \citep[Theorem~6.6]{vanderVaart1998}: uniform integrability of
the normalized weights $D_n$ is the classical contiguity condition,
and \cref{ex:nr-numeraire-ui} below is the standard failure when it
is dropped.  We include the proof because the boundary-permitting
formulation \eqref{eq:nr-gamma-limit} --- the limit may sit mass on
$\{a=0\}$, which then receives zero weight --- is used essentially
in \cref{ex:nr-boundary-mass}.
\end{remark}

\Needspace{20\baselineskip}
\subsection{The boundary}

Weak convergence being free, the entire question is the $s$-th
moment, and \cref{prop:nr-cancellation} moves that question to the
base laws, where it concerns the single family
\eqref{eq:nr-boundary-family}.  The following theorem records the
answer as an equivalence: the displayed ratio tails are not one
sufficient condition among several, but exactly the price of
$\Ws$-convergence.

\begin{theorem}[Exact two-way $\Ws$ boundary]
\label{thm:nr-boundary}
Assume \ref{it:nr-s1}--\ref{it:nr-s2} and fix $s\geq1$.  Put
\[
 W_n:=|B_n|^sA_n^{1-s},
 \qquad
 I_s:=\int_{\{a>0\}}|b|^sa^{1-s}\,Q(\dd a,\dd b).
\]
The following are equivalent:
\begin{enumerate}[label=\textup{(\roman*)}]
\item\label{it:nr-i} $\Gamma\in\cP_s(\R)$,
$\Gamma_n\in\cP_s(\R)$ for every $n$, and
$\Ws(\Gamma_n,\Gamma)\to0$;
\item\label{it:nr-ii} $W_n$ is integrable for every $n$,
$I_s<\infty$, and $\E[W_n]\to I_s$;
\item\label{it:nr-iii} the base-side ratio tails vanish uniformly:
\begin{equation}
 \lim_{R\to\infty}\ \sup_{n\in\mathbb N}
 \E\bigl[W_n\one\{|B_n|>RA_n\}\bigr]=0.
 \label{eq:nr-condition-iii}
\end{equation}
\end{enumerate}
\end{theorem}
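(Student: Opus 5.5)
The plan is to prove (i)$\Leftrightarrow$(ii) via Villani's characterization and the moment identity, and (ii)$\Leftrightarrow$(iii) via a uniform-integrability argument for the pushforward moments. By \cref{lem:nr-weak}, $\Gamma_n\to\Gamma$ weakly under \ref{it:nr-s1}--\ref{it:nr-s2}. By \citet[Theorem~6.9]{Villani2009}, (i) is then equivalent to finiteness of all $s$-moments together with $\int|z|^s\dd\Gamma_n\to\int|z|^s\dd\Gamma$.

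\Cref{prop:nr-cancellation} gives $\int|z|^s\dd\Gamma_n=\E[W_n]/\E[A_n]$. The extended identity \eqref{eq:nr-gamma-limit}, valid for nonnegative Borel $f$, gives $\int|z|^s\dd\Gamma=I_s/m$. Since $\E[A_n]\to m>0$, convergence of the $\Gamma_n$-moments is the same as $\E[W_n]\to I_s$, with finiteness transferring termwise. This proves (i)$\Leftrightarrow$(ii).

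For (ii)$\Leftrightarrow$(iii), I will work on the $\Gamma$ side. The ratio tail is itself a tail moment:
\[
\E[W_n\one\{|B_n|>RA_n\}]=\E[A_n]\int_{\{|z|>R\}}|z|^s\dd\Gamma_n .
\]
This follows from \eqref{eq:nr-general-identity} with $f(z)=|z|^s\one\{|z|>R\}$. The factors $\E[A_n]$ are bounded above and, for large $n$, bounded below by $m/2$. For the finitely many remaining $n$ they are positive, because $A_n>0$. So (iii) is equivalent to uniform integrability of $|z|^s$ under $\{\Gamma_n\}$, that is,
\[
\lim_{R\to\infty}\sup_n\int_{\{|z|>R\}}|z|^s\dd\Gamma_n=0 .
\]
This is precisely the standard equivalent to $s$-moment convergence given weak convergence, as in Villani's theorem or the uniform-integrability convergence theorem. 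Concretely, for (ii)$\Rightarrow$(iii) I would use a continuous cutoff $h_R(z)=|z|^s\wedge$(a continuous function vanishing beyond $R+1$). Weak convergence gives $\int h_R\dd\Gamma_n\to\int h_R\dd\Gamma$. Subtracting from the moment convergence makes $\limsup_n\int_{|z|>R+1}|z|^s\dd\Gamma_n$ small for large $R$. The finitely many initial $n$ are handled by integrability of each $W_n$. For (iii)$\Rightarrow$(ii), uniform integrability gives $\sup_n\E[W_n]<\infty$, hence each $W_n$ is integrable. Portmanteau applied to the lower semicontinuous $|z|^s$ gives $I_s<\infty$. Uniform integrability combined with weak convergence then gives convergence of moments.

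The step I expect to need care is the bookkeeping at the boundary $\{a=0\}$. The limit object must be $\Gamma$ from \eqref{eq:nr-gamma-limit}, not a perspective integral over all of $Q$, so $I_s$ deliberately omits the boundary fiber. The whole argument therefore has to run on the $\Gamma$ side and never pass to a weak limit of $W_n$ under $Q_n$. Such a limit would fail precisely when $Q$ charges $\{a=0\}$ with $b\neq0$, because $W_n$ is not continuous there. Routing everything through \eqref{eq:nr-general-identity} and \cref{lem:nr-weak} avoids this. The remaining point is to check that $\E[A_n]$ stays bounded away from zero for every $n$, which holds because each $\E[A_n]>0$ and $\E[A_n]\to m>0$.
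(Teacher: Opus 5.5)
Your proposal is correct and follows the paper's proof. The moment identities from \cref{prop:nr-cancellation} and \eqref{eq:nr-gamma-limit}, combined with Villani's Theorem~6.9, give (i)$\Leftrightarrow$(ii); the tail identity $\E[W_n\one\{|B_n|>RA_n\}]=\E[A_n]\int_{\{|z|>R\}}|z|^s\dd\Gamma_n$, together with $0<\inf_n\E[A_n]\le\sup_n\E[A_n]<\infty$, turns (iii) into uniform $s$-moment tails of $\Gamma_n$. The only difference is presentational: the paper cites the uniform-tail form of Villani's theorem for (i)$\Leftrightarrow$(iii), whereas you prove (ii)$\Leftrightarrow$(iii) directly with a cutoff argument and handle the finitely many initial indices explicitly, a point the paper covers only in its closing completeness remark.
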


\begin{proof}
Write $m_n:=\E[A_n]$.  By \ref{it:nr-s1}--\ref{it:nr-s2},
$m_n\to m\in(0,\infty)$; hence
$0<\inf_n m_n\leq\sup_n m_n<\infty$.  By
\cref{lem:nr-weak}, $\Gamma_n\to\Gamma$ weakly, while
\cref{prop:nr-cancellation,eq:nr-gamma-limit} give, in
$[0,\infty]$,
\[
 \int|z|^s\,\dd\Gamma_n
 =\frac{\E[W_n]}{m_n},
 \qquad
 \int|z|^s\,\dd\Gamma=\frac{I_s}{m},
\]
and
\[
 \int_{\{|z|>R\}}|z|^s\,\dd\Gamma_n
 =\frac{\E[W_n\one\{|B_n|>RA_n\}]}{m_n}.
\]
The standard moment characterization of Wasserstein convergence
gives \ref{it:nr-i}$\Leftrightarrow$\ref{it:nr-ii}, and its
uniform-tail form gives
\ref{it:nr-i}$\Leftrightarrow$\ref{it:nr-iii}
\citep[Theorem~6.9]{Villani2009}.  For completeness,
\ref{it:nr-iii} also supplies the domains in \ref{it:nr-i}: for
some $R_0$ its supremum is finite, and
$W_n\one\{|B_n|\leq R_0A_n\}\leq R_0^sA_n$, so every $W_n$ is
integrable.  The corresponding uniform $s$-moment tails of
$\Gamma_n$, together with weak convergence and lower-semicontinuous
portmanteau, imply $\Gamma\in\cP_s(\R)$.
\end{proof}

The point of \cref{thm:nr-boundary} is that the exact stability
condition is a statement about the base laws: the single family
\eqref{eq:nr-boundary-family}, truncated on the ratio event
$\{|B_n|>RA_n\}$, decides $\Ws$-convergence.  The following gives
both its ordinary-uniform-integrability form away from the boundary
and the sharp residual obstruction at $s=1$.

\begin{corollary}[Sharp cancellation and ordinary uniform integrability]
\label{cor:nr-sufficient}
Assume \ref{it:nr-s1}--\ref{it:nr-s2}.
\begin{enumerate}[label=\textup{(\alph*)}]
\item\label{it:nr-a} Suppose $\{|B_n|\}$ is uniformly integrable.
Then $\Gamma\in\cP_1(\R)$ and $\Gamma_n\in\cP_1(\R)$ for every
$n$, and
\begin{equation}
 W_1(\Gamma_n,\Gamma)\to0
 \quad\Longleftrightarrow\quad
 \int_{\{a=0\}}|b|\,Q(\dd a,\dd b)=0.
 \label{eq:nr-s1-boundary}
\end{equation}
Thus boundary mass at $(0,0)$ is harmless; in particular the
conclusion holds when $Q(\{a=0\})=0$, with no hypothesis on
$\{1/A_n\}$.
\item\label{it:nr-b} Suppose $Q(\{a=0\})=0$ and fix $s\geq1$.
With $W_n:=|B_n|^sA_n^{1-s}$, the following are equivalent:
\begin{enumerate}[label=\textup{(\roman*)},leftmargin=2.5em]
\item $\Gamma\in\cP_s(\R)$, $\Gamma_n\in\cP_s(\R)$ for every $n$,
and
$\Ws(\Gamma_n,\Gamma)\to0$;
\item $\{W_n:n\in\mathbb N\}$ is uniformly integrable.
\end{enumerate}
\end{enumerate}
\end{corollary}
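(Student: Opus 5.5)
Both parts will be reduced to \cref{thm:nr-boundary}, using the equivalence \ref{it:nr-i}$\Leftrightarrow$\ref{it:nr-ii} (convergence of the $s$-th moment) for part \ref{it:nr-a} and \ref{it:nr-i}$\Leftrightarrow$\ref{it:nr-iii} (vanishing ratio tails) for part \ref{it:nr-b}.

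\emph{Part \ref{it:nr-a}.} Take $s=1$, so $W_n=|B_n|$ and $I_1=\int_{\{a>0\}}|b|\,\dd Q$. Uniform integrability of $\{|B_n|\}$ gives $\sup_n\E|B_n|<\infty$. Hence $\Gamma_n\in\cP_1$ by \cref{prop:nr-cancellation}. Also $\int|b|\,\dd Q<\infty$ by lower-semicontinuous portmanteau, so $I_1<\infty$ and $\Gamma\in\cP_1$. Next, $(a,b)\mapsto|b|$ is continuous on $\Hplus$, and uniform integrability together with \ref{it:nr-s1} upgrades weak convergence to $\E|B_n|\to\int|b|\,\dd Q$. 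I would do this by truncating with $\min(|b|,M)$, as in the proof of \cref{lem:nr-weak}. Therefore
\[
 \E[W_n]\to I_1+\int_{\{a=0\}}|b|\,\dd Q .
\]
Condition \ref{it:nr-ii} of \cref{thm:nr-boundary} holds exactly when the boundary integral vanishes, which gives \eqref{eq:nr-s1-boundary}. If $Q(\{a=0\})=0$, or if the boundary mass sits at $b=0$, the integral is zero. No hypothesis on $1/A_n$ enters anywhere.

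\emph{Part \ref{it:nr-b}.} It suffices to show that \eqref{eq:nr-condition-iii} is equivalent to uniform integrability of $\{W_n\}$ when $Q(\{a=0\})=0$.

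For uniform integrability $\Rightarrow$ \eqref{eq:nr-condition-iii}: on $\{|B_n|>RA_n\}$ we have $W_n=A_n|B_n/A_n|^s$. I would split this event according to whether $W_n>L$. The part with $W_n>L$ is controlled by uniform integrability. On the part with $W_n\le L$, $A_n$ is small: indeed $W_n=A_n(|B_n|/A_n)^s>A_nR^s$, so $A_n<L/R^s$. Then $\E[W_n\one\{W_n\le L,\ A_n<\delta\}]\le L\,\mathbb P(A_n<\delta)$ with $\delta=L/R^s$. Since $Q(\{a=0\})=0$, portmanteau on closed sets gives $\limsup_n\mathbb P(A_n\le\delta)\le Q(\{a\le\delta\})\to0$ as $\delta\to0$. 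This controls large $n$; the finitely many small $n$ are handled individually, since each $W_n$ is integrable and $\mathbb P(A_n<\delta)\to0$ for each fixed $n$ because $A_n>0$. Choosing $L$ first and then $R$ large gives \eqref{eq:nr-condition-iii}.

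For \eqref{eq:nr-condition-iii} $\Rightarrow$ uniform integrability: split $W_n$ on the events $\{|B_n|>RA_n\}$ and $\{|B_n|\le RA_n\}$. The first piece is uniformly small by \eqref{eq:nr-condition-iii}. On the second piece $W_n\le R^sA_n$, and uniform integrability of $\{A_n\}$ from \ref{it:nr-s2} gives
\[
 \E\bigl[W_n\one\{W_n>L,\ |B_n|\le RA_n\}\bigr]\le R^s\,\E\bigl[A_n\one\{A_n>L/R^s\}\bigr],
\]
which is small for $L$ large.

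The main obstacle is the direction uniform integrability $\Rightarrow$ \eqref{eq:nr-condition-iii}. It is the only place where $Q(\{a=0\})=0$ is used, and one must control $\mathbb P(A_n<\delta)$ uniformly in $n$, which requires combining the limsup estimate from portmanteau with a separate argument for finitely many indices. \Cref{ex:nr-boundary-mass} shows this hypothesis cannot be dropped.
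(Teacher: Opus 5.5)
Your proposal is correct and follows the paper's proof essentially step for step. Part (a) uses uniform integrability to get $\E|B_n|\to\int|b|\,\dd Q$ and splits that limit over $\{a>0\}$ and $\{a=0\}$. Part (b) uses the same two inequalities, splitting on $\{W_n>L\}$ in one direction and on $\{|B_n|\le RA_n\}$ in the other, together with the uniform bound on $\mathbb P(A_n\le\delta)$ obtained from portmanteau plus finitely many indices. The only cosmetic difference is that in part (a) you invoke \ref{it:nr-i}$\Leftrightarrow$\ref{it:nr-ii} of \cref{thm:nr-boundary} instead of the first-moment characterization of $W_1$ directly, which amounts to the same argument.
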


\begin{proof}
For \ref{it:nr-a}, uniform integrability and weak convergence give
\[
 \E|B_n|\longrightarrow\int_{\Hplus}|b|\,\dd Q.
\]
Since $\E[A_n]\to m$, \cref{prop:nr-cancellation} yields
\[
 \int|z|\,\dd\Gamma_n\longrightarrow
 \frac1m\int_{\Hplus}|b|\,\dd Q,
 \qquad
 \int|z|\,\dd\Gamma=
 \frac1m\int_{\{a>0\}}|b|\,\dd Q.
\]
Together with $\Gamma_n\to\Gamma$ weakly, the first-moment
characterization of $W_1$ proves \eqref{eq:nr-s1-boundary}.

For \ref{it:nr-b}, put $Z_n:=B_n/A_n$.  Boundary-freeness and weak
convergence imply
\begin{equation}
 \lim_{\delta\downarrow0}\sup_n Q_n(A_n\leq\delta)=0.
 \label{eq:nr-uniform-away-zero}
\end{equation}
Indeed, portmanteau controls all sufficiently large $n$, and each of
the finitely many remaining laws assigns no mass to $\{A_n=0\}$.
If $\{W_n\}$ is uniformly integrable, then for $M,R>0$,
\[
 \E[W_n\one\{|Z_n|>R\}]
 \leq \E[W_n\one\{W_n>M\}]
      +M Q_n(A_n\leq MR^{-s}).
\]
First choose $M$ and then $R$, using
\eqref{eq:nr-uniform-away-zero}, to obtain
\eqref{eq:nr-condition-iii}; apply \cref{thm:nr-boundary}.
Conversely, if $\Ws(\Gamma_n,\Gamma)\to0$, then
\cref{thm:nr-boundary} gives \eqref{eq:nr-condition-iii}, and
\[
 \E[W_n\one\{W_n>M\}]
 \leq \E[W_n\one\{|Z_n|>R\}]
      +R^s\E[A_n\one\{A_n>M/R^s\}].
\]
Choose $R$ by \eqref{eq:nr-condition-iii} and then $M$ by
\ref{it:nr-s2}.  Hence $\{W_n\}$ is uniformly integrable.
\end{proof}

\begin{remark}[Ratio tails versus ordinary uniform integrability]
\label{rem:nr-exact-vs-clean}
Under \ref{it:nr-s2}, the exact ratio-tail condition
\eqref{eq:nr-condition-iii} always implies ordinary uniform
integrability of $W_n=|B_n|^sA_n^{1-s}$, by the second inequality in
the proof above.  The converse holds when the limiting law does not
charge $\{a=0\}$, by
\cref{cor:nr-sufficient}\ref{it:nr-b}, but can fail at the boundary:
\cref{ex:nr-boundary-mass} has $W_n=|B_n|\leq1$ while
\eqref{eq:nr-condition-iii} fails.  At $s=1$,
\cref{cor:nr-sufficient}\ref{it:nr-a} identifies the residual
obstruction exactly as $\int_{\{a=0\}}|b|\,\dd Q$.
\end{remark}

\section{Sharpness}
\label{sec:nr-sharp}

Neither the hypotheses of \cref{thm:nr-boundary,cor:nr-sufficient} nor
the exponent in \eqref{eq:nr-boundary-family} is decorative.  Three
two-atom sequences, all computed in closed form, settle this.  The
first attains the threshold exactly at any prescribed $s>1$ while
confining every primitive coordinate to $[0,1]$; the second shows that
the $s=1$ cancellation is destroyed by payoff mass on the
vanishing-numéraire boundary, and only by that; the third shows that
numéraire uniform integrability cannot be dropped even for weak
convergence.

\begin{example}[The boundary is attained: bounded inputs, exact
threshold]
\label{ex:nr-two-atom}
Fix $s>1$.  Let $\varepsilon_n\downarrow0$,
$a_n:=\varepsilon_n^{1/(s-1)}$, and let $Q_n$ put mass
$1-\varepsilon_n$ at $(A,B)=(1,0)$ and mass $\varepsilon_n$ at
$(A,B)=(a_n,1)$.  All inputs take values in $[0,1]^2$, so every
nonnegative-order moment of the primitive coordinates is uniformly
bounded and
\ref{it:nr-s1}--\ref{it:nr-s2} hold with limit
$Q=\delta_{(1,0)}$ and $Q(\{a=0\})=0$; also
$\E[A_n]=1-\varepsilon_n+\varepsilon_na_n\to1$.  The weighted
marginal $\Gamma_n$ has an atom at $0$ of mass
$(1-\varepsilon_n)/\E[A_n]$ and an atom at $1/a_n$ of mass
$\varepsilon_na_n/\E[A_n]\to0$; hence $\Gamma_n\to\delta_0=\Gamma$
weakly.  For every $\sigma\geq1$,
\[
 \int|z|^\sigma\,\dd\Gamma_n
 =\frac{\varepsilon_n\,a_n^{1-\sigma}}{\E[A_n]}
 =\frac{\varepsilon_n^{(s-\sigma)/(s-1)}}{\E[A_n]}
 \longrightarrow
 \begin{cases}
  0, & 1\leq\sigma<s,\\[0.2em]
  1, & \sigma=s,\\[0.2em]
  +\infty, & \sigma>s.
 \end{cases}
\]
Thus $\Gamma_n\to\delta_0$ in $W_\sigma$ for every $\sigma<s$
(in particular the $s=1$ cancellation operates:
the first moment is $\varepsilon_n/\E[A_n]\to0$, matching
\cref{prop:nr-cancellation} exactly), while the $s$-th moment
converges to $1\neq0$ and $\Ws$-convergence fails.  Consistently,
the family $\{|B_n|^\sigma A_n^{1-\sigma}\}$ is uniformly integrable
for every $\sigma<s$: on the rare atom its tail expectation is at
most $\varepsilon_n a_n^{1-\sigma}
=\varepsilon_n^{(s-\sigma)/(s-1)}\to0$, and the finitely many
remaining indices are harmless.  It
first fails to be uniformly integrable at $\sigma=s$, where
the single rare atom carries $\E[W_n^{(s)}]=1$; here
$W_n^{(\sigma)}:=|B_n|^\sigma A_n^{1-\sigma}$.
\end{example}

\begin{remark}[Reading of \cref{ex:nr-two-atom}]
\label{rem:nr-reading}
Viewed on the $\Gamma$-side this is the classical escape-to-infinity
moment example; the content is on the $Q$-side, where \emph{nothing}
escapes: both coordinates live in $[0,1]$, all nonnegative-order
moments of the primitive coordinates are bounded, and the joint laws
converge.  The escape is manufactured
entirely by the ratio-and-reweight map in a vanishing-numéraire
state of probability $\varepsilon_n$ in which the payoff leg stays
of order one.  In a stylized annuity interpretation, this is a rare
scenario in which the annuity collapses while the leg value does not.
The entire normalized call-price curve converges (an $s=1$ object,
protected by the cancellation).  With $s=2$ and
$a_n=\varepsilon_n$, however, the raw quadratic statistic
$\int z^2\dd\Gamma_n$ stays asymptotically one unit away from its
limiting value.
This statistic enters second-order CMS-type approximations
\citep{Hagan2003}; any such approximation with nonzero exposure to
it can therefore fail to converge.  A particular CMS correction must
still be assessed by its payoff growth.  Bounded primitive inputs do
not imply stable weighted moments: normalized call-price curves can
converge while a quadratic statistic stays wrong by a fixed amount.
\end{remark}

\begin{example}[Nonzero payoff mass on the vanishing-numéraire
boundary defeats $W_1$-stability]
\label{ex:nr-boundary-mass}
Fix $\varepsilon\in(0,1)$, let $\alpha_n\downarrow0$, and let $Q_n$
put mass $1-\varepsilon$ at $(1,0)$ and mass $\varepsilon$ at
$(\alpha_n,1)$.  Then \ref{it:nr-s1}--\ref{it:nr-s2} hold with
limit $Q=(1-\varepsilon)\delta_{(1,0)}+\varepsilon\delta_{(0,1)}$,
which charges $\{a=0\}$; $m=1-\varepsilon>0$;
$\{|B_n|\}$ is bounded by $1$, hence uniformly integrable.
\Cref{lem:nr-weak} applies and indeed
$\Gamma_n\to\Gamma=\delta_0$ weakly (the escaping atom at
$1/\alpha_n$ has $\Gamma_n$-mass
$\varepsilon\alpha_n/\E[A_n]\to0$).  But
\[
 \int|z|\,\dd\Gamma_n
 =\frac{\varepsilon}{\E[A_n]}
 \longrightarrow\frac{\varepsilon}{1-\varepsilon}
 \neq0=\int|z|\,\dd\Gamma,
\]
so $W_1$-convergence fails although the payoff legs are bounded by
one.  Consistently, \eqref{eq:nr-condition-iii} fails:
$\E[|B_n|\one\{|B_n|>RA_n\}]=\varepsilon$ once $R\alpha_n<1$.  The
boundary-payoff condition \eqref{eq:nr-s1-boundary} is therefore
sharp: the cancellation protects against vanishing-numéraire states
of vanishing probability, and boundary mass at $(0,0)$ is harmless,
but nonzero payoff mass on $\{a=0\}$ destroys $W_1$-convergence.
\end{example}

\begin{example}[Numéraire uniform integrability is needed for weak
stability]
\label{ex:nr-numeraire-ui}
Let $Q_n$ put mass $1-\varepsilon_n$ at $(1,0)$ and mass
$\varepsilon_n$ at $(1/\varepsilon_n,\,1/\varepsilon_n)$ with
$\varepsilon_n\downarrow0$.  Then $Q_n\to\delta_{(1,0)}$ weakly, but
$\{A_n\}$ is not uniformly integrable
($\E[A_n]=1-\varepsilon_n+1\to2\neq1$).  The weighted marginals are
\[
 \Gamma_n
 =\frac{1-\varepsilon_n}{\E[A_n]}\,\delta_0
 +\frac{1}{\E[A_n]}\,\delta_1
 \longrightarrow
 \tfrac12\delta_0+\tfrac12\delta_1
 \neq\delta_0=\Gamma:
\]
even \emph{weak} stability fails.  This is the classical failure of
contiguity when the likelihood ratios are not uniformly integrable
\citep[Chapter~6]{vanderVaart1998}, in numéraire dress: half of the
limiting weighted mass sits on an event the limiting base law does
not see.
\end{example}

\section{Joint continuity for parameterized contracts}
\label{sec:nr-joint}

In applications the pair $(A,B)$ carries a contract parameter ---
an expiry, a tenor, a settlement convention --- and the model law
and the parameter vary together.  The following packages
\cref{thm:nr-boundary,cor:nr-sufficient} in that setting.

\begin{theorem}[Joint continuity, compact image, affine slices]
\label{thm:nr-joint}
Let $\Omega$ be a Polish space, $\cK$ a compact set of Borel
probability laws on $\Omega$ (weak topology), and $\mathsf A$ a
compact metric space of contract parameters.  Let
\[
 (a,\omega)\longmapsto\bigl(A_a(\omega),B_a(\omega)\bigr)
 \in\Hopen
\]
be jointly continuous, and fix $s\geq1$.  Assume the uniform
weighted-tail condition
\begin{equation}
 \lim_{R\to\infty}\
 \sup_{(Q,a)\in\cK\times\mathsf A}
 \E_Q\bigl[\,|B_a|^sA_a^{1-s}\,
 \one\{|B_a|>R\,A_a\}\bigr]=0,
 \label{eq:nr-joint-tail}
\end{equation}
together with uniform integrability of
$\{A_a:\ (Q,a)\in\cK\times\mathsf A\}$ and
$\inf_{(Q,a)}\E_Q[A_a]>0$.  Then:
\begin{enumerate}[label=\textup{(\roman*)}]
\item the map
$(Q,a)\mapsto\Gamma_a(Q):=\Gamma$ of \eqref{eq:nr-gamma} applied to
the law of $(A_a,B_a)$ under $Q$ is continuous from
$\cK\times\mathsf A$ into $(\cP_s(\R),\Ws)$;
\item its image is $\Ws$-compact;
\item on every calibration slice
$\cK_c:=\{Q\in\cK:\E_Q[A_a]=c\}$ with $c>0$ and $a$ fixed, the map
$Q\mapsto\Gamma_a(Q)$ is affine whenever the relevant mixture
remains in $\cK_c$; on all of $\cK$ it is projective (a ratio of two
affine maps).  For $Q_1,Q_2$ and their mixture in $\cK$, mixtures
reweight by relative numéraire value: with
$w_1:=\lambda\,\E_{Q_1}[A_a]$ and
$w_2:=(1-\lambda)\,\E_{Q_2}[A_a]$,
\[
 \Gamma_a\bigl(\lambda Q_1+(1-\lambda)Q_2\bigr)
 =\frac{w_1\,\Gamma_a(Q_1)+w_2\,\Gamma_a(Q_2)}{w_1+w_2}.
\]
\end{enumerate}
\end{theorem}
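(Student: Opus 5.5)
Continuity in (i) will be reduced to \cref{thm:nr-boundary} along sequences. The space $\cK\times\mathsf A$ is metrizable: the weak topology on $\cP(\Omega)$ for Polish $\Omega$ is metrizable. So it suffices to take $(Q_n,a_n)\to(Q_\infty,a_\infty)$ in $\cK\times\mathsf A$ and show $\Ws(\Gamma_{a_n}(Q_n),\Gamma_{a_\infty}(Q_\infty))\to0$.

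\emph{Step 1: weak convergence of the pushforwards.} Let $\tilde Q_n$ be the law of $(A_{a_n},B_{a_n})$ under $Q_n$, and define $\tilde Q_\infty$ likewise. We need $\tilde Q_n\to\tilde Q_\infty$ weakly on $\Hplus$. By Skorokhod's representation, realize $\omega_n\sim Q_n$ with $\omega_n\to\omega_\infty$ almost surely. Joint continuity of $(a,\omega)\mapsto(A_a(\omega),B_a(\omega))$ then gives $(A_{a_n}(\omega_n),B_{a_n}(\omega_n))\to(A_{a_\infty}(\omega_\infty),B_{a_\infty}(\omega_\infty))$ almost surely. Alternatively, one can use the extended continuous-mapping theorem on $\mathsf A\times\Omega$ with $\delta_{a_n}\otimes Q_n\to\delta_{a_\infty}\otimes Q_\infty$.

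The limit lies in $\Hopen$, since $A_a>0$ pointwise. Hence $\tilde Q_\infty(\{a=0\})=0$, and the limit object \eqref{eq:nr-gamma-limit} coincides with $\Gamma(\tilde Q_\infty)=\Gamma_{a_\infty}(Q_\infty)$. Uniform integrability of $\{A_a\}$ over $\cK\times\mathsf A$ gives \ref{it:nr-s2} for the sequence. The hypothesis $\inf\E_Q[A_a]>0$ applied at the limit point gives $m>0$. So \ref{it:nr-s1}--\ref{it:nr-s2} hold.

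\emph{Step 2: the tail condition.} Condition \eqref{eq:nr-joint-tail} restricted to the sequence $(Q_n,a_n)$ is exactly \eqref{eq:nr-condition-iii}. \Cref{thm:nr-boundary}, \ref{it:nr-iii}$\Rightarrow$\ref{it:nr-i}, then yields $\Gamma_{a_n}(Q_n)\in\cP_s$, $\Gamma_{a_\infty}(Q_\infty)\in\cP_s$, and $\Ws$-convergence. There is one subtlety: the sequence may have only finitely many distinct members, or repeat the limit point. The theorem handles this without modification. Moreover, applying the theorem to constant sequences shows that every $\Gamma_a(Q)$ lies in $\cP_s$, so the map is well defined into $(\cP_s,\Ws)$. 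I expect Step 1 to be the main point needing care. The joint-continuity and compactness hypotheses are used there, and one must check that the positivity of $A_a$ at the limit removes the boundary issue entirely.

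\emph{Steps 3 and 4.} Part (ii) follows at once: $\cK\times\mathsf A$ is compact (a product of compacts), and the continuous image of a compact set is compact.

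For (iii), apply \eqref{eq:nr-general-identity} to $Q=\lambda Q_1+(1-\lambda)Q_2$. For bounded Borel $f$, linearity of expectation gives
\[
\int f\,\dd\Gamma_a(Q)=\frac{\lambda\E_{Q_1}[A_af(B_a/A_a)]+(1-\lambda)\E_{Q_2}[A_af(B_a/A_a)]}{\lambda\E_{Q_1}[A_a]+(1-\lambda)\E_{Q_2}[A_a]}.
\]
Rewriting each numerator term as $w_i\int f\,\dd\Gamma_a(Q_i)$ gives the displayed mixture formula. This formula exhibits $\Gamma_a$ as a ratio of the affine maps $Q\mapsto\E_Q[A_af(B_a/A_a)]$ and $Q\mapsto\E_Q[A_a]$. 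On the slice $\cK_c$ we have $w_1+w_2=c$, $w_1=\lambda c$ and $w_2=(1-\lambda)c$, so the map is affine there.
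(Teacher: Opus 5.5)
Your proposal is correct and follows the paper's proof essentially step for step: weak convergence of the pushed-forward laws by the continuous mapping theorem on $\mathsf A\times\Omega$ (your Skorokhod route is an equivalent variant), positivity of $A_a$ to exclude boundary mass at the limit, \cref{thm:nr-boundary} \ref{it:nr-iii}$\Rightarrow$\ref{it:nr-i} for (i), compactness of continuous images for (ii), and affinity in $Q$ of $\E_Q[A_a f(B_a/A_a)]$ and $\E_Q[A_a]$ for (iii). Your remarks on metrizability of $\cK\times\mathsf A$ and on constant sequences placing every $\Gamma_a(Q)$ in $\cP_s(\R)$ state explicitly two points the paper leaves implicit.
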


\begin{proof}
(i)  Let $(Q_n,a_n)\to(Q,a)$ in $\cK\times\mathsf A$.  The product
laws $\delta_{a_n}\otimes Q_n$ converge weakly to
$\delta_a\otimes Q$ on $\mathsf A\times\Omega$, so by the continuous
mapping theorem applied to the jointly continuous map
$(a,\omega)\mapsto(A_a(\omega),B_a(\omega))$, the laws of
$(A_{a_n},B_{a_n})$ under $Q_n$ converge weakly to the law of
$(A_a,B_a)$ under $Q$.  The latter charges $\Hopen$, so its
$a$-marginal puts no mass at $0$.  The assumed uniform integrability
of the numéraires gives \ref{it:nr-s2}, and
\eqref{eq:nr-joint-tail} gives \eqref{eq:nr-condition-iii} along the
sequence.  \Cref{thm:nr-boundary} yields
$\Ws(\Gamma_{a_n}(Q_n),\Gamma_a(Q))\to0$.

(ii)  A continuous image of the compact set
$\cK\times\mathsf A$ is compact.

(iii)  Both
$Q\mapsto\E_Q[A_a\,f(Z_a)]$ and $Q\mapsto\E_Q[A_a]$ are affine in
$Q$; the displayed mixture formula follows by dividing, and on a
slice of constant $\E_Q[A_a]$ the ratio itself is affine.
\end{proof}

\begin{remark}[Why calibration slices are the natural domain]
\label{rem:nr-slice}
In the calibration instance the numéraire's time-zero value is
itself a quoted input: every candidate law prices the annuity
identically, so the relevant $\cK$ \emph{is} a slice $\cK_c$ and the
map is genuinely affine there.  Off the slice, affinity in $Q$ fails
in general --- the mixture formula reweights by relative numéraire
value --- which matters for barycentric constructions: averaging
models does not average their annuity-measure marginals unless the
annuity values agree.
\end{remark}

\section{Two applications}
\label{sec:nr-applications}

The two settings named in \cref{sec:nr-intro} are now instances.  In
the first, the boundary characterizes a moment hypothesis that the
change-of-numéraire literature currently assumes; in the second, it
separates the statistics of an annuity-measure marginal that survive a
model limit from those that do not.

\subsection{The change-of-numéraire involution of martingale
optimal transport}
\label{subsec:nr-cn}

For $\mu$ on $(0,\infty)$ with mean
$m(\mu)\in(0,\infty)$, the change-of-numéraire transform
\[
 S(\mu):=\text{law of }1/X\ \text{under}\ \tfrac{X}{m(\mu)}\dd\mu
\]
is the involution of \citet{CampiLaachirMartini2017} (there under a
unit-mean normalization), extended to weak martingale transport by
\citet{BeiglboeckPammerRiess2024} and, with general reweighting
functions, by \citet{BackhoffLoeperObloj2024}.  This is
\eqref{eq:nr-gamma} with $(A,B)=(X,1)$, and
\cref{prop:nr-cancellation} gives
\begin{equation}
 \int y^s\,\dd S(\mu)
 =\frac{\E_\mu[X^{1-s}]}{m(\mu)},
 \qquad s\geq 1 .
 \label{eq:nr-cn-moment}
\end{equation}

\begin{corollary}[Exact $\Ws$-stability of the involution]
\label{cor:nr-cn}
Let $\mu_n\to\mu$ weakly as probability laws on $(0,\infty)$,
assume $\{X_n\}$ is uniformly integrable and $m(\mu)>0$, and fix
$s\geq1$.  Then $S(\mu_n)\to S(\mu)$ weakly, and the following are
equivalent:
\begin{enumerate}[label=\textup{(\roman*)}]
\item $S(\mu)\in\cP_s(\R)$, $S(\mu_n)\in\cP_s(\R)$ for every $n$,
and
$\Ws(S(\mu_n),S(\mu))\to0$;
\item every expectation below is finite and
$\E_{\mu_n}[X_n^{1-s}]\to\E_\mu[X^{1-s}]$;
\item $\{X_n^{1-s}:n\in\mathbb N\}$ is uniformly integrable;
\item
\[
 \lim_{R\to\infty}\sup_n
 \E_{\mu_n}[X_n^{1-s}\one\{X_n<1/R\}]=0.
\]
\end{enumerate}
At $s=1$ these conditions are automatic, so $W_1$-stability needs
no assumption beyond the standing ones.
\end{corollary}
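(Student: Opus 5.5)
The plan is to specialize \cref{thm:nr-boundary} and \cref{cor:nr-sufficient}\ref{it:nr-b} to $(A_n,B_n)=(X_n,1)$, so that $Q_n$ is the law of $(X_n,1)$ on $\Hopen$ and $Q$ is the law of $(X,1)$, $X\sim\mu$.

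First we check \ref{it:nr-s1}--\ref{it:nr-s2}. Weak convergence $\mu_n\to\mu$ on $(0,\infty)$ implies weak convergence on $[0,\infty)$, since every bounded continuous function on $[0,\infty)$ restricts to one on $(0,\infty)$. Pushing forward under the continuous map $x\mapsto(x,1)$ then gives $Q_n\to Q$ weakly on $\Hplus$. Because $\mu$ lives on $(0,\infty)$, $Q(\{a=0\})=0$, and $\Gamma$ of \eqref{eq:nr-gamma-limit} is exactly $S(\mu)$. Uniform integrability of $\{X_n\}$ and $m(\mu)>0$ give \ref{it:nr-s2}. Weak convergence $S(\mu_n)\to S(\mu)$ is then \cref{lem:nr-weak}.

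Next we identify the pieces. Here $W_n=|B_n|^sA_n^{1-s}=X_n^{1-s}$ and $I_s=\E_\mu[X^{1-s}]$. By \eqref{eq:nr-cn-moment}, finiteness of the $s$-moments is finiteness of these expectations.
\begin{itemize}
\item Item (ii) is \cref{thm:nr-boundary}\ref{it:nr-ii} verbatim.
\item Item (iii) is \cref{cor:nr-sufficient}\ref{it:nr-b}(ii), which applies because the limit is boundary-free.
\item Item (iv) is \eqref{eq:nr-condition-iii}, because the event $\{|B_n|>RA_n\}$ equals $\{1>RX_n\}=\{X_n<1/R\}$.
\end{itemize}
So (i)$\Leftrightarrow$(ii)$\Leftrightarrow$(iv) comes from \cref{thm:nr-boundary}, and (i)$\Leftrightarrow$(iii) from \cref{cor:nr-sufficient}\ref{it:nr-b}.

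Finally, at $s=1$ we have $X_n^{0}\equiv1$, which is trivially uniformly integrable, so (iii) holds automatically. Equivalently, the tail in (iv) is $\mu_n(X_n<1/R)$. This tends to $0$ uniformly in $n$ by \eqref{eq:nr-uniform-away-zero}, which uses tightness away from zero supplied by boundary-freeness. Either way all four conditions hold.

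The only step needing care is the passage from weak convergence on $(0,\infty)$ to weak convergence on the closed half-plane, and the verification that $Q$ does not charge $\{a=0\}$. Both are immediate because $\mu$ itself is a probability measure on $(0,\infty)$. No other obstacle is expected, since the corollary is a direct substitution into the earlier results.
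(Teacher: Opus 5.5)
Your proposal is correct and is essentially the paper's proof: substitute $(A_n,B_n)=(X_n,1)$ into \cref{lem:nr-weak}, \cref{thm:nr-boundary} and \cref{cor:nr-sufficient}\ref{it:nr-b}, noting that $W_n=X_n^{1-s}$ and $\{|B_n|>RA_n\}=\{X_n<1/R\}$. Your explicit verification of \ref{it:nr-s1}--\ref{it:nr-s2} and of $Q(\{a=0\})=0$ spells out checks that the paper leaves implicit.
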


\begin{proof}
Apply \cref{lem:nr-weak,thm:nr-boundary,cor:nr-sufficient}%
\ref{it:nr-b} with $(A_n,B_n)=(X_n,1)$; then
$W_n=X_n^{1-s}$ and $\{|B_n|>RA_n\}=\{X_n<1/R\}$.
\end{proof}

Two readings.  First, the standing assumption
``$S(\nu)$ has finite second moment'' in
\citet{BeiglboeckPammerRiess2024} is, by \eqref{eq:nr-cn-moment},
exactly $\E_\nu[X^{-1}]<\infty$; \cref{cor:nr-cn} upgrades this
static observation to the exact stability statement: along weakly
convergent marginals with converging means, the transformed
marginals move continuously in $W_2$ precisely when the inverse
moments $\E[X_n^{-1}]$ converge, equivalently when
$\{X_n^{-1}\}$ is uniformly integrable.  The characterization is
attained already by the two-atom laws that put mass
$1-\varepsilon_n$ at $1$ and mass $\varepsilon_n$ at
$a_n=\varepsilon_n$; their transforms have the same exact $W_2$
threshold as \cref{ex:nr-two-atom}.  Second, the inverse-moment classes
$\cP_{-1,1}$ imposed in \citet{BackhoffLoeperObloj2024} are
sufficient-condition territory for $s=2$; the corollary locates the
exact boundary inside them.

\subsection{Annuity-measure swap-rate marginals}
\label{subsec:nr-annuity}

Let $a$ range over a compact family of swap contracts, $A_a>0$ the
annuity (a finite sum of accrual-weighted zero-coupon bonds --- hence,
for a contract class with uniformly bounded accrual sums, a common
bond bound gives a common bound on the annuities and therefore their
uniform integrability), and $B_a$ the difference of the
legs.  Then $\Gamma_a(Q)$ is the annuity-measure swap-rate law and
\eqref{eq:nr-call-identity} is the swaption quoting identity.
\Cref{cor:nr-sufficient,thm:nr-joint} say:

\begin{enumerate}
\item \emph{Prices and normalized call-price curves require no
inverse-annuity moments.}
Assume the compact model--contract setting and joint continuity of
\cref{thm:nr-joint}, together with uniform integrability, over the
model--contract class, of the annuities and legs, positive
normalization, and its boundary-free primitive range.  These
hypotheses supply the $s=1$ uniform ratio-tail condition in
\eqref{eq:nr-joint-tail}.  Indeed, for $R\geq M/\delta$,
\[
 \E_Q\bigl[|B_a|\one\{|B_a|>RA_a\}\bigr]
 \leq
 2\,\E_Q\bigl[|B_a|\one\{|B_a|>M\}\bigr]
 +M\,Q(A_a\leq\delta),
\]
by splitting on $\{A_a>\delta\}$ and $\{A_a\leq\delta\}$; the first
term is uniformly small in $M$ by the assumed uniform integrability of
the legs, and the second is handled by
\[
 \sup_{(Q,a)\in\cK\times\mathsf A}Q(A_a\leq\delta)
 \xrightarrow[\ \delta\downarrow0\ ]{}0,
\]
which holds because $(Q,a)\mapsto Q(A_a\leq\delta)$ is upper
semicontinuous (portmanteau on the closed set $\{A_a\leq\delta\}$,
after the continuous-mapping step of \cref{thm:nr-joint}) and
decreases pointwise to $Q(A_a=0)=0$, so that for each $\eta>0$ the
nested closed sets
$\{(Q,a):Q(A_a\leq\delta)\geq\eta\}$ have empty intersection and one
of them is therefore already empty by compactness of
$\cK\times\mathsf A$.  \Cref{thm:nr-joint} then gives joint
$W_1$-convergence of the annuity-measure marginals and uniform
convergence of their normalized call-price curves.  As the
normalizations converge, the associated physically settled swaption
prices converge at each fixed strike, uniformly over bounded strike
sets.  No additional moment assumption on $1/A_a$ is imposed.  Related
finite-model calculations, such as the discounting-switch analysis
of \citet{Piterbarg2020}, use an annuity-ratio measure-change density
and approximate its conditional projection onto the swap rate, in
the tradition of \citet[Chapter~16]{AndersenPiterbarg2010}.  The
present result is an asymptotic stability statement, not a valuation
formula for a finite switch or a bound on that approximation error.
\item \emph{Quadratic CMS-type statistics are boundary-priced.}
For the raw second moment,
\[
 \int z^2\dd\Gamma_a
 =\frac{\E_Q[B_a^2/A_a]}{\E_Q[A_a]};
\]
in the boundary-free regime its stability is equivalent to uniform
integrability of $\{B_a^2/A_a\}$.  At the level of the abstract
primitive-law model, \cref{ex:nr-two-atom} shows
convergence of the entire normalized call-price curve while this
quadratic statistic remains asymptotically displaced by one.  Thus a
second-order CMS approximation
depending nontrivially on that statistic need not converge
\citep{Hagan2003}.  A particular CMS correction must be assessed by
its payoff growth; the theorem does not identify every correction
with the second moment.  This is the same higher-moment
integrability obstruction highlighted, in a different setting, by
\citet{AndersenPiterbarg2007}, and it connects with the
moment-extraction discussion of
\citet[fn.~16]{TrolleSchwartz2014}.
\item \emph{Algebraic cancellation and the settlement distinction.}
For a cash-settled payoff $G(S)(S-K)^+$ with $G(S)>0$ and
$0<\E[G(S)]<\infty$, setting $A=G(S)$ and $B=S G(S)$ gives the
perspective identity
$A(B/A-K)^+=G(S)(S-K)^+$.  Thus \cref{prop:nr-cancellation} applies
to the normalized $G$-weighted law; tradability is not one of its
mathematical hypotheses.  The distinction is instead that $G(S)$ is
not a traded numéraire, so this normalized weighting is not
automatically a market pricing measure.  In the reverse problem of
recovering a forward density from a quoted cash-settled price surface,
inverse-annuity integral conditions appear in the no-arbitrage
analysis of \citet{Mercurio2008}.  Under its stated weak-convergence
and integrability hypotheses, the present theorem gives forward
stability of the specified weighted laws; it neither constructs a
market pricing measure nor removes or characterizes those reverse
conditions.
\end{enumerate}

\section*{AI-use disclosure}
\addcontentsline{toc}{section}{AI-use disclosure}

The author used Anthropic Claude Code and OpenAI Codex as
interactive research and writing tools.  They assisted with
exploratory discussion, testing and refinement of ideas, literature
and source organization, code development and verification,
mathematical error checking, and editorial revision.

\bibliographystyle{plainnat}
\phantomsection
\addcontentsline{toc}{section}{References}
\bibliography{references}

\end{document}